\documentclass[a4paper, 11pt]{article}
\usepackage[pdftex]{hyperref} 
\usepackage{srcltx}
\usepackage{indentfirst}
\usepackage{graphicx}
\usepackage{overpic}
\usepackage{multirow}
\usepackage[hang]{subfigure}
\usepackage{amsmath, amssymb, amsthm}
\usepackage{color}
\usepackage[mathscr]{euscript}
\usepackage{mathrsfs} 

\newtheorem{theorem}{Theorem}
\newtheorem{lemma}{Lemma}

\newcommand\bbR{\mathbb{R}}

\newcommand\bbE{\mathbb{E}}
\newcommand\bbN{\mathbb{N}}

\newcommand\dd{\,\mathrm{d}}

\newcommand\pd[2]{\dfrac{\partial {#1}}{\partial {#2}}}
\newcommand\opd[2]{\dfrac{\dd {#1}}{\dd {#2}}}

\newcommand \ri{\mathrm{i}}

\newcommand \Vw{\mathcal{V}}  
\newcommand\Rv{ {\mathbb{R}_{v} } }

\newcommand\BigNorm[1]{ \left \| #1 \right \| } 

\newcommand \FTFF[3]{\mathcal{F}_{ {#1}\rightarrow{#2}  } \left( #3 \right) }

\newcommand \IFTFF[3]{\mathcal{F}^{-1}_{ {#1}\rightarrow{#2}  } \left( #3 \right) }
\newcommand \bounded[1]{ \mathcal{L}\left( #1 \right) }

\numberwithin{equation}{section}

\setlength{\oddsidemargin}{0cm} 
\setlength{\evensidemargin}{0cm}
\setlength{\textwidth}{150mm}
\setlength{\textheight}{230mm}

\newcommand\assign{:=}

\graphicspath{{images/}}

{\theoremstyle{remark} }

\setlength{\oddsidemargin}{0cm} 
\setlength{\evensidemargin}{0cm}
\setlength{\textwidth}{150mm}
\setlength{\textheight}{230mm}

\title{Parity-decomposition and Moment Analysis for Stationary Wigner
Equation with Inflow Boundary Conditions}

\author{Ruo Li\thanks{HEDPS \& CAPT, LMAM \& School of Mathematical
    Sciences, Peking University, Beijing, China, email: {\tt
  rli@math.pku.edu.cn}.}, ~~ Tiao Lu\thanks{CAPT, HEDPS, LMAM,
IFSA Collaborative Innovation Center of MoE, \&
    School of Mathematical Sciences, Peking University, Beijing,
    China, email: {\tt tlu@math.pku.edu.cn}.},
    ~~ Zhangpeng Sun \thanks{School of Mathematical Sciences, 
	  Peking University, Beijing, China, 
	  email: {\tt sunzhangpeng@pku.edu.cn}.}
}

\begin{document}

\maketitle
\begin{abstract}
We study the stationary Wigner equation on a bounded, one-dimensional
spatial domain with inflow boundary conditions by using the parity
decomposition in (Barletti and Zweifel, Trans. Theory Stat.
Phys., 507--520, 2001). The decomposition reduces the
half-range, two-point boundary value problem into two decoupled
initial value problems of the even part and the odd part. Without
using a cutoff approximation around zero velocity, we prove that the
initial value problem for the even part is well-posed.
For the odd part, we prove the uniqueness of the solution in the odd
$L^2$-space by analyzing the moment system. An example is provided to
show that how to use the analysis to obtain the solution of the
stationary Wigner equation with inflow boundary conditions.
\end{abstract}

  \vspace*{4mm}

  \noindent {\bf Keywords:} Stationary Wigner equation, inflow
boundary conditions, well-posedness.

\section{Introduction} \label{sec:intro} 

As the size of electronic devices approaches the nanometer scale,
quantum effects, such as tunneling, have to be considered in study of
the device properties. As a result, quantum models, including
the Schr\"odinger equation, non-equilibrium Green function methods and 
the Wigner equation, have attracted increasing attentions. In these
models, the Wigner equation has some advantages over other quantum
models \cite{Ferry_transport}. One advantage is that 
the inflow boundary conditions for the Boltzmann equation can be
extended to the Wigner equation since the latter can be formulated as
the former with a quantum correction term. Especially, the stationary
Wigner equation with inflow boundary conditions is often adopted in
numerical simulation of nanoscale devices. Starting from
\cite{Frensley1987}, simulations of nanoscale  devices using such a
model have provided a lot of encouraging numerical 
results \cite{Jensen1990, Tsuchiya1991, 
Shih1994, GeKo05, Querlioz2006, ShLu09, Kelley2010, LiLuSun2014}. 

Even though, rigorous mathematical theory on the well-posedness of the
stationary Wigner equation with inflow boundary conditions is
still an open problem, even for one dimensional case
\cite{Barletti2003}. We note that there are some results in a
semi-discrete version, such as \cite{ALZ00}.
In \cite{LiWigner2014}, the semi-discrete version of the Wigner
equation is related to the truncated Wigner equation proposed in
\cite{JiangLuCai2013} using the Shannon sampling theory.  
The truncation length is called a coherence length in many papers,
e.g., \cite{Jiang2011}. 

The well-posedness of the stationary continuous Wigner equation
with inflow boundary conditions is still a temptatious problem
mathematically. We adopt the parity decomposition technique to study
the stationary Wigner equation with inflow boundary conditions, which
has been proposed in \cite{Zweifel2001}. As in
\cite{Zweifel2001}, the Wigner equation with inflow boundary
conditions is a linear boundary value problem (BVP), and the even and
odd parts are decoupled and each part is a solution of the Wigner BVP
with corresponding boundary conditions.
It was pointed out in \cite{Zweifel2001} that: ``{\it It is worth to
  remark   that we do not obtain a well-posedness result for the full
  problem   $($that is, with $v$ in place of $\eta_{\epsilon}(v))$ by
  simply   letting $\epsilon$ go to $0$. The analysis of the full
  problem must   be carried out by means of more sophisticated
  techniques than those
  employed here. In particular, $B(x)$ could be studied as an
  unbounded linear evolution operator in a suitable space, with the
  initial datum $((f_{b,e}^+(-l/2), f_{b,o}^+ (-l/2)))$ restricted to
  the appropriated domain.}'' 
In \cite{Zweifel2001}, a small interval centered at $v=0$ is removed
to obtain the well-posedness result. Here we will try to clarify the
questions put forward therein. Without a cutoff approximation around
$v=0$, we prove that the pseudo-differential operator $B(x)$ (defined
in \eqref{BxOperator}) is a bounded linear operator on the even
$L^2$-space, $L^2_e(\bbR_v)$ (defined in \eqref{DefL2e}), only if the
potential function is regular enough. Thus, we can obtain the
well-posedness of the even part directly. Generally, $B(x)$ is  on
longer a bounded operator on the odd $L^2$-space,
$L^2_o(\bbR_v)$(defined in \eqref{DefL2o}). However, we prove the
uniqueness of the solution of the odd part in $L^2_o(\bbR_v)$ by
analyzing its moment system. 

The rest of this paper is organized as follows. In Section 2, we
present the governing equations and in Section 3, we introduce the
parity decomposition. The equation with inflow boundary conditions is
discussed in Section 4 and then a short conclusion closes 
the main text.


\section{Wigner Equation}
We consider the stationary, linear Wigner equation of the
form \cite{Wigner1932}
\begin{equation} \label{WignerBounded}
  v \pd{ f(x,v)}{x} - \Theta f(x,v) = 0, \quad x \in [-l/2,l/2], v \in
  \Rv.    
\end{equation}
For convenience, we have set the reduced Planck constant $\hbar$, the
electron charge $e$ and the effective mass of electron $m$ to be equal
to unity. Here $\Theta$ is an anti-symmetric pseudo-differential
operator. Precisely,  
\begin{equation} \label{Theta}
  (\Theta f) (x,v) = \ri \IFTFF{y}{v}{ 
  [V(x+y/2)-V(x-y/2)] \hat{f}(x,y) }, 
\end{equation} 
where $V: \bbR \rightarrow \bbR$ is the potential. Using the convolution
theorem of the Fourier transform, we have 
\begin{equation}
  (\Theta f)(x,v) = \int_{\Rv}\Vw(x,v-v') f(x,v') \dd v',  
\end{equation}
where $\Vw(x,v)$ is defined in \eqref{VwDef}.
We use $\hat{f}(x,y) = \FTFF{v}{y}{f(x,v)}$  to denote the Fourier 
transform of $f(x,v)$, and 
\begin{equation}
  \FTFF{v}{y}{f(x,v)}=
  \int_{\Rv} f(x,v) \exp ( - \ri v y ) \dd v . 
\end{equation}
Correspondingly, the inverse Fourier transform of $\hat{f}(x,y)$ is 
defined as 
\begin{equation}
  \IFTFF{y}{v}{\hat{f}(x,y)}= \frac{1}{2\pi}\int_{\bbR}\hat{f}(x,y)
  \exp(\ri v y ) \dd y .
\end{equation}

The derivation of the Wigner equation from the Schr\"odinger equation
can be found in many references, e.g., \cite{Hillery1984, 
JiangLuCai2013, Ruo2012}.  Here, we only describe the Wigner-Weyl 
transform simply for completeness of the paper. 
A quantum system is described by the Schr\"odinger equation 
\begin{equation} \label{sch}
  \left( -\frac{1}{2} \opd{^2}{x^2} 
  + V(x) \right) \psi_n(x) = \bbE_n \psi_n(x),  
\end{equation}
where the eigen-function $\psi_n(x)$ is called a pure state and the
associated eigenvalue $\bbE_n \in \bbR$ is called an eigen energy. The
density matrix $\rho(x,x')$ for a mixed state is defined as
\begin{equation}
  \rho(x,x') = \sum_n P_n \psi^{\ast}_n(x)\psi_n(x') 
\end{equation}
where $P_n$ is the probability of the electron occupying the state
$\psi_n$, and $\sum_n P_n = 1$. The stationary Liouville-von Neumann 
equation is then derived from \eqref{sch} as
\begin{equation} \label{von}
  \left[ -\frac{1}{2} \left(\pd{^2}{x^2} - \pd{^2}{ {x'}^2 } 
  \right) + V(x)-V(x') \right] \rho(x,x') = 0  
\end{equation}
Introducing the quasi-probability distribution function 
\begin{equation} \label{fxvDef}
  f(x,v) = \IFTFF{y}{v}{\rho(x+y/2,x-y/2)}, 
\end{equation}
and applying a change of variables and the inverse Fourier transform of
\eqref{von}, we derive the Wigner equation governing $f(x,v)$ as
\begin{equation} \label{Wigner}
  v \pd{f(x,v)}{x} - \Theta f(x,v) = 0, \quad x\in \bbR, v \in \Rv,  
\end{equation}
where $\Theta f$ is defined in \eqref{Theta}. 

We need to specify some conditions to ensure that \eqref{Wigner} or
\eqref{WignerBounded} has a unique solution. Before the discussion on
these conditions, let us to examine the properties of the stationary
Wigner equation at first.


\section{Parity Decomposition}
Following \cite{Zweifel2001}, we take the point of view that the 
function $f(x,\cdot):\Rv \rightarrow \bbR$ belongs to the Hilbert space
$L^2(\Rv)$ for every fixed $x\in \bbR$, equipped with the
norm
\begin{equation}
\BigNorm{f(x,\cdot)}_{L^2(\Rv)} =\left( 
   \int_{\Rv} |f(x,v)|^2 \dd v \right)^{1/2}. 
\end{equation}
Thus, $f$ is regarded as a vector-valued function from $\bbR$ to $L^2(\Rv)$.
To emphasize this, we will often use the notation $[f(x)](v)$ instead
of $f(x,v)$ later on. With such notations, the Wigner equation
\eqref{Wigner} is recast into an equation for the unknown function
$f:\bbR \rightarrow L^2(\Rv)$ of the following form
\begin{equation} \label{WignerVec}
\opd{}{x}f(x)  - B(x) f (x) = 0, \quad x \in \bbR. 
\end{equation}
Here, $\opd{}{x}$ is a differential operator on $C^{1}(\bbR; L^2(\Rv))$ and the
operator $B(x)$ is defined by
\begin{equation} \label{BxOperator}
[B(x) f(x)](v)  = \frac{\ri }{v}  \IFTFF{y}{v}{
   D_V(x,y) [\widehat{f(x)}] (y) },  
\end{equation}
where 
\[
D_V(x,y) = V(x+y/2)-V(x-y/2), \quad
[ \widehat{f(x)} ](y) = \FTFF{v}{y}{ [f(x)](v)}. 
\]

We decompose the space $L^2(\Rv)$ into the direct sum $L^2_e(\Rv)
\bigoplus L^2_o(\Rv)$ where $L^2_e(\Rv)$ and $L^2_o(\Rv)$ are the subspaces of $L^2(\Rv)$
defined by 
\begin{equation}\label{DefL2e}
L^2_e(\Rv) = \{ u(v) \in L^2(\Rv) : u(v) = u(-v)\},
\end{equation}
\begin{equation} \label{DefL2o}
L^2_o(\Rv) = \{ u(v) \in L^2(\Rv) : u(v) = -u(-v)\}. 
\end{equation}
Clearly, for $\forall f\in L^2(\Rv)$, $f$ is uniquely decomposed into
the sum
\[
f = f_e + f_o,
\]
where $f_e$ and $f_o$ are its even part and odd part, respectively,
i.e.,   
\[
f_e(v) = [P_e f](v) \assign \frac{1}{2} \left( f(v) + f(-v) \right),
  \quad 
f_o(v) = [P_o f](v) \assign \frac{1}{2} \left( f(v) - f(-v) \right),
\]
where $P_e : L^2(\Rv)\rightarrow L^2_e(\Rv)$ and $P_o: L^2(\Rv)
\rightarrow L^2_o(\Rv)$ are two projection operators defined in
the above equations.

As pointed out in \cite{Zweifel2001}, $B(x)$ is an even operator which
preserves the parity. Therefore, the subspace $L^2_e(\Rv)$ and
$L^2_o(\Rv)$ are closed with exertion of $B(x)$. This fact is formally
expressed by the commutation relations
\[
P_e B(x) = B(x) P_e, \quad 
P_o B(x) = B(x) P_o, \quad \forall x \in \bbR.
\]
By applying the operators $P_e$ and $P_o$ on both sides of
\eqref{WignerVec}, we immediately split \eqref{WignerVec} into two
identical decoupled equations for the even part and the odd part of
the unknown $f(x)$:
\begin{equation} \label{WignerVecEven}
\opd{}{x} f_e(x) - B(x) f_e(x) = 0,  \quad x\in \bbR, 
\end{equation} 
\begin{equation} \label{WignerVecOdd} 
\opd{}{x} f_o(x) - B(x) f_o(x) = 0, \quad x \in \bbR.  
\end{equation}
The equations \eqref{WignerVec}, \eqref{WignerVecEven} and
\eqref{WignerVecOdd} are linear ordinary differential equations. This
brings us to consider initial value problems at first.

We consider the following initial value problem (IVP)
\begin{equation}
  \opd{}{x}f(x)  - B(x) f (x) = 0, \quad x \in \bbR,
  \tag*{(\ref{WignerVec})}
\end{equation}
with the initial condition  
\begin{equation} \label{WignerVecIV}
  f(-l/2) = f_b \in L^2(\Rv).
\end{equation}
The initial value $f_b$ can be uniquely decomposed into the sum
\[
  f_b = f_{b,e} + f_{b,o},
\]
where $f_{b,e} = P_e f_b$ and $f_{b,o} = P_o f_b$. By the even
property of the operator $B(x)$, it is easy to verify that if $f(x)$
is the solution of the IVP \eqref{WignerVec}+\eqref{WignerVecIV},
then $f_e = P_e f$ is the solution of the IVP \eqref{WignerVecEven}
with the initial condition
\begin{equation} \label{WignerVecEvenIV}
  f_e(-l/2) = f_{b,e}, 
\end{equation}
and $f_o = P_o f$ is the solution of the IVP \eqref{WignerVecOdd} with
the initial condition 
\begin{equation} \label{WignerVecOddIV}
  f_o(-l/2) = f_{b,o}.  
\end{equation}

In order to study the IVP \eqref{WignerVec}+\eqref{WignerVecIV},
we need only to analyze the IVP 
\eqref{WignerVecEven}+\eqref{WignerVecEvenIV} and the IVP 
\eqref{WignerVecOdd}+\eqref{WignerVecOddIV}, respectively.

\subsection{The even part}
For the even part of the Wigner equation, i.e., the IVP
\eqref{WignerVecEven}+\eqref{WignerVecEvenIV}, we rewrite it as
\begin{equation} \label{WVE1}
  \opd{f(x)}{x} -B(x) f(x) = 0,
\end{equation}
with the initial condition 
\begin{equation} \label{WVEI1}
  f(-l/2) = f_b \in L^2_e(\Rv).
\end{equation}

Below we prove that under some assumptions, there exists a unique
solution $f(x)\in L^2_e(\Rv)$ of the IVP \eqref{WVE1}-\eqref{WVEI1}. 
As a preliminary step to prove the result, we give a
lemma to declare that $B(x)$ is a bounded linear operator on
$L^2_e(\Rv)$.  

\begin{lemma}  \label{BoundedOperator}
  Let 
  \begin{equation} \label{VwDef}
    \Vw(x,v) = \ri \IFTFF{y}{v}{ V(x+y/2)-V(x-y/2) } .  
  \end{equation}
  Assuming $\Vw \left( x, v \right) \in H^1 \left( \Rv \right)$, $B(x) :
  L_e^2 \left( \Rv \right) \rightarrow L_e^2 ( \Rv )$ defined in
  \eqref{BxOperator} can be rewritten into
  \[
    [ B(x)f ](v) = \frac{1}{v} \Vw\ast f(x,v).  
  \]
  Then  $B(x)$ is a bounded linear operator on $L^2_e(\Rv)$.
\end{lemma}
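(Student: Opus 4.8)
\emph{Proof plan.} The statement contains two claims — the convolution representation of $B(x)$, and its boundedness on $L^2_e(\Rv)$ — and I would establish them in that order. For the representation, I would use that, by \eqref{VwDef}, the $v$-Fourier transform of $\Vw(x,\cdot)$ is precisely $\FTFF{v}{y}{\Vw(x,v)}=\ri\,D_V(x,y)$. Since $\Vw(x,\cdot)\in H^1(\Rv)$ forces $D_V(x,\cdot)\in L^2(\bbR)$, and $\widehat{f(x)}\in L^2(\bbR)$ whenever $f(x)\in L^2(\Rv)$, the product $D_V(x,\cdot)\widehat{f(x)}$ lies in $L^1(\bbR)$, so $\IFTFF{y}{v}{D_V(x,y)\widehat{f(x)}(y)}$ is a genuine (continuous, bounded) function; the convolution theorem then turns \eqref{BxOperator} into $[B(x)f](v)=\frac{1}{v}\big(\Vw(x,\cdot)\ast f(x,\cdot)\big)(v)$, which is the asserted formula.

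For boundedness, set $g\assign\Vw(x,\cdot)\ast f(x,\cdot)$. The key structural fact is that $D_V(x,\cdot)$ is odd in $y$, hence $\Vw(x,\cdot)$ is odd in $v$, hence $g$ is odd whenever $f$ is even; moreover $g$ is continuous (convolution of two $L^2$ functions) and, since $\partial_v\Vw(x,\cdot)\in L^2(\Rv)$, one has $\partial_v g=\big(\partial_v\Vw(x,\cdot)\big)\ast f(x,\cdot)\in L^\infty(\Rv)$, so $g\in C^1(\bbR)$ with $g(0)=0$. I would then bound $\BigNorm{B(x)f}_{L^2(\Rv)}^2=\int_{\Rv}|g(v)|^2/v^2\dd v$ by splitting at $|v|=1$. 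On $\{|v|>1\}$ the weight $1/v^2$ is integrable and $\BigNorm{g}_{L^\infty}\le\BigNorm{\Vw(x,\cdot)}_{L^2(\Rv)}\BigNorm{f}_{L^2(\Rv)}$ by Cauchy--Schwarz, so that part is $\lesssim\BigNorm{\Vw(x,\cdot)}_{L^2(\Rv)}^2\BigNorm{f}_{L^2(\Rv)}^2$. On $\{|v|\le1\}$ I would write $g(v)=\int_0^v\partial_v g(s)\dd s$ (legitimate because $g(0)=0$) and invoke Hardy's inequality, $\int_{|v|\le1}|g(v)|^2/v^2\dd v\lesssim\BigNorm{\partial_v g}_{L^2(-1,1)}^2\lesssim\BigNorm{\partial_v\Vw(x,\cdot)}_{L^2(\Rv)}^2\BigNorm{f}_{L^2(\Rv)}^2$. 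Summing the two pieces gives $\BigNorm{B(x)f}_{L^2(\Rv)}\le C\,\BigNorm{\Vw(x,\cdot)}_{H^1(\Rv)}\BigNorm{f}_{L^2(\Rv)}$, and since $B(x)$ is manifestly linear and, as recalled above, parity-preserving, it is a bounded linear operator on $L^2_e(\Rv)$.

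The hard part is the $1/v$ singularity at $v=0$: because a function in $H^1(\Rv)$ need not be in $L^1(\Rv)$, the kernel $\Vw(x,\cdot)/v$ need not be integrable, so $B(x)$ cannot be treated as a convolution against an $L^1$ kernel and Young's inequality is not directly available. One is forced to exploit the oddness of $g=\Vw(x,\cdot)\ast f$ on even inputs, which makes $g$ vanish at the origin and keeps $\partial_v g$ bounded — this is exactly the place where the hypothesis $\Vw(x,\cdot)\in H^1(\Rv)$, as opposed to merely $L^2(\Rv)$, is essential, through the Hardy estimate on the small-$|v|$ region. A minor additional point is to justify $\partial_v\big(\Vw(x,\cdot)\ast f\big)=\big(\partial_v\Vw(x,\cdot)\big)\ast f$ and the pointwise regularity of $g$, but these follow from standard properties of convolutions once $\Vw(x,\cdot)\in H^1$ and $f\in L^2$.
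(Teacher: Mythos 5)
Your proposal is correct, but it handles the crucial small-$|v|$ region by a genuinely different mechanism than the paper. The paper first uses the orthogonality $\int_{\Rv}\Vw(x,v')f(x,v')\dd v'=0$ (valid because $\Vw(x,\cdot)$ is odd and $f$ is even) to replace $B(x)$ on $L^2_e(\Rv)$ by the kernel-corrected operator $[A(x)f](v)=\int_{\bbR}\frac{\Vw(x,v-v')-\Vw(x,-v')}{v}f(x,v')\dd v'$, and then bounds the corrected kernel in $L^2_{v'}$ uniformly in $v$ via the difference-quotient characterization of $H^1$ (Evans, Ch.~5, Thm.~3), so that Cauchy--Schwarz closes the estimate for $|v|\le 1$; the large-$|v|$ part is treated exactly as you do. You instead exploit the same parity cancellation at the level of the output: $g=\Vw(x,\cdot)\ast f$ is odd and continuous, hence $g(0)=0$, and $\partial_v g=(\partial_v\Vw(x,\cdot))\ast f\in L^\infty$, so Hardy's inequality (or simply $|g(v)|\le |v|\,\BigNorm{\partial_v\Vw(x,\cdot)}_{L^2}\BigNorm{f}_{L^2}$ from the fundamental theorem of calculus) controls $\int_{|v|\le1}|g(v)/v|^2\dd v$. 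Both routes spend the $H^1$ hypothesis in the same place and yield a bound of the form $C\BigNorm{\Vw(x,\cdot)}_{H^1}\BigNorm{f}_{L^2}$. What the paper's formulation buys is the auxiliary operator $A(x)$, which is bounded on all of $L^2(\Rv)$ (not just the even subspace) and is reused later in the odd-part analysis around \eqref{OddFA}--\eqref{ConditionOrthogonal}; your argument does not produce that extension, but it is more self-contained (no citation of the difference-quotient theorem) at the cost of the routine justification that $\Vw(x,\cdot)\ast f\in C^1$ with derivative $(\partial_v\Vw(x,\cdot))\ast f$, which, as you note, is standard for $H^1\ast L^2$.
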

\begin{proof}
  By the definition of $\Vw$ in \eqref{VwDef}, we have $\Vw(x) \in
  L^2_o(v)$. Thus, for $\forall f(x) \in L^2_e(\Rv)$, we have
  \begin{equation} \label{VwZero}
    \int _{\Rv} \Vw(x,v) f(x,v) \dd v = 0. 
  \end{equation}
  We introduce an linear operator $A(x):L^2(\Rv) \rightarrow
  L^2(\Rv)$   
  \begin{equation} \label{OperatorA}
  [ A(x) f ] \left(v \right) =  \int_{\mathbb{R}}
  \frac{\Vw \left( x, v - v' \right) - \Vw \left( x, 0 - v' \right)}{v}
  f \left( x, v' \right) d v'. 
\end{equation}
  By \eqref{VwZero}, it is concluded that $A(x) = B(x)$ on
  $L^2_e(\Rv)$. 
  We will prove that $A(x)$ is a bounded linear operator on
  $L^2(\Rv)$ by estimating it on regions $\left| v \right|
  > 1$ and region $\left| v \right| \leq 1$, respectively.

  First, we consider the part with $\left| v \right| > 1$.  Using $\Vw
  \left( x, v \right) \in L^2 \left( \mathbb{R}_v \right)$ and the
  Young's inequality, we have
  \begin{equation} \label{eq6}
    \BigNorm{\Theta  f( x, v )}_{L^{\infty}} 
    =\|\Vw \left( x, v \right) \ast f \left( x, v
    \right) \|_{L^{\infty} \left( \mathbb{R}_v \right)} \leq \|\Vw
    \left( x, \cdot \right) \|_{L^2} \|f \left( x, \cdot \right) \|_{L^2
      \left( \mathbb{R}_v \right)}. 
  \end{equation} 
  By the Cauchy-Schwartz inequality, we then have  
  \begin{equation} \label{eq7}
    \left|
      \int_{\mathbb{R}_{v'}} \Vw \left( x, 0 - v' \right) f \left( x,
        v' \right) \dd v' \right| \leq \|\Vw \left( x, \cdot \right)
    \|_{L^2}^{} \|f \left( x, \cdot \right) \|_{L^2 \left(
        \mathbb{R}_{v'} \right)} .  
  \end{equation}
  It is obtained directly from \eqref{eq6} and \eqref{eq7} that
  \begin{align}
    \int_{\left| v \right| > 1} \left| [A(x) f] \left(
        v \right) \right|^2 \dd v & =
    2 \int_{\left| v \right| > 1} \left| \frac{\Theta 
        f \left( x, v \right)}{v} \right|^2 \dd v + 2 \int_{\left| v
      \right| > 1} \frac{\|\Vw \left( x, \cdot \right) \|_{L^2}^2 \|f
      \left( x, \cdot \right) \|_{L^2 \left( \mathbb{R}_{v'}
        \right)}^2}{v^2} \dd v  \notag \\ 
    & \leq 8\|\Vw
    \left( x, \cdot \right) \|^2_{L^2} \|f \left( x, \cdot
    \right) \|^2_{L^2} .  \label{eq8}
  \end{align}

  Then, we consider the part with $\left| v \right| \leq
  1$. According to the Cauchy-Schwartz inequality again, we have
  \begin{align*}
    \left| [A(x)  f] \left( v \right) \right| & \leq
    \int_{\mathbb{R}} \left| \frac{\Vw \left( x, v - v'
	\right) - \Vw \left( x, 0 - v' \right)}{v} \right| \left| f \left(
        x, v' \right) \right| \dd v' \\
    & \leq \left \| \frac{\Vw \left( x, v - v' \right) - \Vw \left( x,
          0 - v' \right)}{v} \right\|_{L^2 \left( \mathbb{R}_{v'} \right)} \|f
    \left( x, \cdot \right) \|_{L^2}, \quad v \in \left[ - 1, 1 \right] 
  \end{align*}
  By using Theorem 3 in Chapter 5 of \cite{Evans2010}, we have
  \[
  \BigNorm{ \frac{\Vw \left( x, v_{} - v'
      \right) - \Vw \left( x, - v' \right)}{v} }_{L^2 \left(
      \mathbb{R}_{v'} \right)} \leq 
  \BigNorm{ \partial_{v'} \Vw \left( x,
      v' \right) }_{L^2 \left( \mathbb{R}_{v'} \right)} .
  \]
This fact, together with the Cauchy-Schwartz inequality,
  gives us the following estimate on the velocity interval $[-1,1]$
  that
  \begin{equation} \label{eq9}
    \int_{\left| v \right|
      \leq 1} \left| [A(x) f] \left( v \right) \right|^2 \dd v \leq 
    \|f \left( x, \cdot \right) \|^2_{L^2}   \BigNorm{ \partial_{v}
\Vw \left( x,
      v \right) }^2_{L^2 \left( \mathbb{R}_{v} \right)} 
  \end{equation} 
  Collecting \eqref{eq8} and \eqref{eq9} together results in
  \[
  \| [A(x) f] \left( v \right) \|^2_2
  \leq C\|f \left( x, \cdot \right) \|^{^2}_{L^2} 
  \]
  where 
  \[ C = 8 \BigNorm{\Vw \left( x, \cdot
    \right) } ^{^2}_{H^1} . 
  \]

  We have proved that $A(x)$ is a bounded operator on $L^2(\Rv)$.
  When it is restricted on the subspace $L^2_e(\Rv)$, we have $A(x) =
  B(x)$. This completes the proof that $B(x)$ is a linear bounded
  operator on $L^2_e(\Rv)$.
\end{proof}

By Lemma \ref{BoundedOperator}, we immediately have 

\begin{theorem} \label{EvenWell}
Let $\bounded{L^2_e(\Rv)}$ denote the space of bounded linear operators
on $L^2_e(\Rv)$. If the assumptions for Lemma \ref{BoundedOperator} hold, then 
one has that
\begin{enumerate}
\item[(a)] If $B(x) \in L^1( (-l/2,l/2), \bounded{L^2_e(\Rv)})$ , then
  the IVP \eqref{WVE1}-\eqref{WVEI1} has a unique mild solution $f
  \in W^{1,1}\left( (-l/2,l/2), L^2_e(\Rv) \right)$.
\item[(b)] If $B(x)$ is strongly continuous in $x$ on $[-l/2,l/2]$ and
  uniformly bounded in the norm of $\bounded{L_e^2(\Rv)}$ on
  $[-l/2,l/2]$, then the solution $f$ is a classical solution, i.e.,
  $f \in C^1\left( [-l/2,l/2], L^2_e(\Rv) \right)$.
  \end{enumerate}
\end{theorem}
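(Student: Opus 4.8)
The plan is to recast the IVP \eqref{WVE1}--\eqref{WVEI1} as a Volterra integral equation in the Banach space $L^2_e(\Rv)$ and to solve it by a contraction-mapping argument, after which the stated regularity follows by a short bootstrap. By Lemma \ref{BoundedOperator} we know $B(x) \in \bounded{L^2_e(\Rv)}$ for every $x$, so a natural notion of mild solution is available: $f \in C([-l/2,l/2], L^2_e(\Rv))$ is a mild solution of \eqref{WVE1}--\eqref{WVEI1} if and only if
\[
  f(x) = f_b + \int_{-l/2}^{x} B(s) f(s) \dd s, \qquad x \in [-l/2,l/2].
\]
Under hypothesis (a) the map $s \mapsto \BigNorm{B(s)}_{\bounded{L^2_e(\Rv)}}$ belongs to $L^1(-l/2,l/2)$; for a continuous $f$ the integrand $s \mapsto B(s) f(s)$ is then strongly measurable (approximate $f$ uniformly by simple functions and invoke strong measurability of $B(\cdot)$) and dominated in norm by $\BigNorm{B(s)}_{\bounded{L^2_e(\Rv)}} \sup_{s} \BigNorm{f(s)}_{L^2_e(\Rv)}$, hence Bochner integrable, so the integral operator is well defined on $C([-l/2,l/2], L^2_e(\Rv))$.

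To obtain a fixed point I would use a weighted (Bielecki) norm adapted to the merely integrable coefficient. Set $\Phi(x) = \int_{-l/2}^{x} \BigNorm{B(s)}_{\bounded{L^2_e(\Rv)}} \dd s$, an absolutely continuous nondecreasing function, and equip $C([-l/2,l/2], L^2_e(\Rv))$ with the equivalent norm $\BigNorm{f}_{\lambda} = \sup_{x \in [-l/2,l/2]} e^{-\lambda \Phi(x)} \BigNorm{f(x)}_{L^2_e(\Rv)}$. Writing $(T f)(x) = f_b + \int_{-l/2}^{x} B(s) f(s) \dd s$ and using $\int_{-l/2}^{x} e^{\lambda \Phi(s)} \Phi'(s) \dd s \le \lambda^{-1} e^{\lambda \Phi(x)}$, one finds $\BigNorm{Tf - Tg}_{\lambda} \le \lambda^{-1} \BigNorm{f - g}_{\lambda}$, so for any $\lambda > 1$ the map $T$ is a contraction on a complete space. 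Banach's fixed point theorem then yields a unique mild solution $f \in C([-l/2,l/2], L^2_e(\Rv))$, and uniqueness in the mild sense is automatic from uniqueness of the fixed point.

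For the regularity claims I would argue as follows. Because $f$ is continuous and $B(\cdot) \in L^1((-l/2,l/2), \bounded{L^2_e(\Rv)})$, the integrand $s \mapsto B(s) f(s)$ lies in $L^1((-l/2,l/2), L^2_e(\Rv))$; a function that is a constant plus the indefinite Bochner integral of an $L^1$-function is absolutely continuous with a.e.\ derivative equal to the integrand, so $f \in W^{1,1}((-l/2,l/2), L^2_e(\Rv))$ and $\opd{f}{x} = B(x) f(x)$ for a.e.\ $x$, which proves (a). Under hypothesis (b), strong continuity of $x \mapsto B(x)$ together with its uniform boundedness and the continuity of $f$ force $x \mapsto B(x) f(x)$ to be continuous on $[-l/2,l/2]$: split $B(x) f(x) - B(x_0) f(x_0) = B(x)\bigl(f(x) - f(x_0)\bigr) + \bigl(B(x) - B(x_0)\bigr) f(x_0)$, bound the first term by $\sup_s \BigNorm{B(s)}_{\bounded{L^2_e(\Rv)}} \BigNorm{f(x) - f(x_0)}_{L^2_e(\Rv)}$ and the second by strong continuity at $x_0$. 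Hence the indefinite integral is continuously differentiable, $f \in C^1([-l/2,l/2], L^2_e(\Rv))$, and \eqref{WVE1} holds in the pointwise (classical) sense.

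The conceptual content here is small: this is essentially the classical Picard--Lindel\"of theory for a linear ODE in a Banach space with bounded generator. Accordingly I expect the only genuine obstacle to be the functional-analytic bookkeeping forced by allowing $B(\cdot)$ to be merely $L^1$ in $x$ rather than continuous or uniformly bounded, namely verifying strong (Bochner) measurability of $s \mapsto B(s) f(s)$ for continuous $f$ and replacing the naive iteration estimate by the weighted-norm contraction above. Once part (a) is in place, the upgrade to a classical solution in part (b) is routine.
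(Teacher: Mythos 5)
Your proposal is correct: the paper offers no written proof of Theorem \ref{EvenWell}, presenting it as an immediate consequence of Lemma \ref{BoundedOperator} together with the standard theory of linear ODEs in a Banach space with a bounded, $L^1$-in-$x$ generator, and your Volterra/Bielecki fixed-point argument plus the $W^{1,1}$ and $C^1$ bootstrap is precisely the standard proof of that cited theory. The only point worth adding is to remark that any $W^{1,1}((-l/2,l/2), L^2_e(\Rv))$ solution is absolutely continuous and hence satisfies your integral equation, so uniqueness in the class stated in part (a) follows from uniqueness of your fixed point.
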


\subsection{The odd part}
We rewrite the odd part of the Wigner equation, i.e., the IVP
\eqref{WignerVecOdd}+\eqref{WignerVecOddIV}, into
\begin{equation} \label{WVO1}
  \opd{f(x)}{x} -B(x) f(x) = 0,
\end{equation}
with the initial condition 
\begin{equation} \label{WVOI1}
  f(-l/2) = f_b \in L^2_o(\Rv).
\end{equation}

We instantly declare that the solution of the IVP 
\eqref{WVO1}-\eqref{WVOI1} has to be an odd function.
\begin{lemma} \label{MustOdd}
  If $f_b(v) \in L^2_o(\Rv)$ and $f(x) \in L^2(\Rv)$ is the solution of 
 the IVP \eqref{WVO1}-\eqref{WVOI1}, then $f(x) \in L^2_o(\Rv)$. 
\end{lemma}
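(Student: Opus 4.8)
The plan is to project the equation onto the even subspace and invoke uniqueness for the even initial value problem. Given the solution $f(x) \in L^2(\Rv)$ of \eqref{WVO1}-\eqref{WVOI1}, set $g(x) \assign P_e f(x)$, the even part of $f(x)$. Since $P_e$ is a bounded linear projection it commutes with $\od{}{x}$, and since $B(x)$ preserves parity we have the commutation relation $P_e B(x) = B(x) P_e$ on the domain of $B(x)$; applying $P_e$ to \eqref{WVO1} therefore gives
\begin{equation}
  \od{}{x} g(x) - B(x) g(x) = 0, \quad x \in [-l/2,l/2],
\end{equation}
so that $g$ solves the even equation \eqref{WVE1}. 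Because $f_b \in L^2_o(\Rv)$, the initial datum is $g(-l/2) = P_e f_b = 0$. Hence $g$ is a solution of the even IVP \eqref{WVE1}-\eqref{WVEI1} with vanishing initial value.

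The second step is a standard uniqueness argument. By Lemma \ref{BoundedOperator}, $B(x)$ is a bounded linear operator on $L^2_e(\Rv)$, and $g$ satisfies the mild form $g(x) = \int_{-l/2}^{x} B(s) g(s) \dd s$ (this is the integral identity underlying Theorem \ref{EvenWell}; in the mild-solution setting one projects this identity directly, so no differentiability issue arises). Taking $L^2(\Rv)$-norms gives $\BigNorm{g(x)}_{L^2(\Rv)} \leq \int_{-l/2}^{x} \BigNorm{B(s)}_{\bounded{L^2_e(\Rv)}} \BigNorm{g(s)}_{L^2(\Rv)} \dd s$, and Gr\"onwall's inequality forces $\BigNorm{g(x)}_{L^2(\Rv)} = 0$ for all $x \in [-l/2,l/2]$. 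Therefore $P_e f(x) = 0$, i.e., $f(x) = P_o f(x) \in L^2_o(\Rv)$ for every $x$, which is the assertion.

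The only point that needs care is the legitimacy of projecting the differential (or integral) equation, which rests on $P_e$ being bounded and on $B(x)$ mapping even functions to even functions and odd functions to odd functions; once these are in place the argument is routine. I expect the main (mild) obstacle to be checking that the hypotheses of Lemma \ref{BoundedOperator} — regularity of the potential ensuring $\Vw(x,\cdot) \in H^1(\Rv)$ — are indeed in force here, since the uniqueness step for the even part depends on the boundedness of $B(x)$ on $L^2_e(\Rv)$; without that one would have to establish uniqueness for the even IVP by some other means.
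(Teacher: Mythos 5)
Your proposal is correct and follows essentially the same route as the paper: the paper also sets $g = P_e f$, observes it solves the even IVP \eqref{WVE1} with zero initial value, and invokes the well-posedness (uniqueness) of the even problem from Theorem \ref{EvenWell} to conclude $g \equiv 0$. Your only addition is to spell out the Gr\"onwall/mild-solution uniqueness argument that the paper delegates to Theorem \ref{EvenWell}, so the two proofs coincide in substance.
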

\begin{proof} 
  Let
  \[
   [g(x)](v) = \frac{[f(x)](v)+[f(x)](-v) }{2},
  \]
  where $f(x) \in L^2(\Rv)$ is the solution of the IVP
  \eqref{WVO1}-\eqref{WVOI1}. One may directly verify that $g(x) \in
  L^2_e(\Rv)$ and $g(x)$ is the solution of the IVP \eqref{WVE1} with
  zero initial value. By Theorem \ref{EvenWell}, we conclude that
  $g(x) = 0$, thus $f(x) \in L^2_o(\Rv)$.
\end{proof}

However, whether there exists a solution $f(x) \in L_o^2(\Rv)$ for
\eqref{WVO1}-\eqref{WVOI1} is difficult to discuss. A necessary 
condition for the existence is derived as follow.  We rewrite the 
Wigner equation into the form
\begin{equation}  \label{OddFA}
 \opd{f(x)}{x} -A(x) f(x) = 
  \frac{1}{v} \int_{\mathbb{R}_v'}
  \Vw \left( x,  v' \right) [f(x)](v') d v'
\end{equation}
where $A(x)$ defined in \eqref{OperatorA} has been proved to be 
a bounded linear operator on $L^2(\Rv)$.
For a function $f(x)\in  L_o^2(\Rv) $ with $\opd{f(x)}{x} \in  
L_o^2(\Rv)$, we know that the left hand side of \eqref{OddFA} is 
in $ L^2(\Rv)$ by using the boundedness of the operator $A(x)$, but 
the right hand side of \eqref{OddFA} is not in $ L^2(\Rv)$ unless 
the solution $f(x,v)$ satisfies 
\begin{equation} \label{ConditionOrthogonal}
 \int_{\mathbb{R}_v}
 \Vw \left( x,  v \right) [f(x)](v) d v =0.
\end{equation}
 
However, it is difficult  to give a condition for the initial value 
$f_b(v) \in L^2_o(\Rv)$ to ensure that there exists a solution $f(x)
\in L^2_o(\Rv)$ which satisfies the condition 
\eqref{ConditionOrthogonal}. So we will assume the existence, and 
discuss the uniqueness from the viewpoint of moments of the
distribution function.

Let us rewrite the Wigner equation \eqref{WVO1} into 
\begin{equation} \label{wfxv}
  v \pd{f(x,v)}{x} - \int \Vw(x,v-v') f(x,v') \dd v' = 0.  
\end{equation}
For $n \in \bbN^+ :=\{0,1,2,\cdots,\}$, we define 
\begin{equation} \label{VwnDef}
  J_n(x) =\int_{\Rv} v^n f(x,v) \dd v, \quad 
  \Vw_n(x) = \int_{\Rv} v^n \Vw(x,v) \dd v. 
\end{equation}
If the moment generating function of $f(x,v)$ 
$$
M_{v}[f](x,t) = \int_{-\infty}^{\infty} e^{v t} f(x,v) \dd v 
= \sum_{n=0}^{\infty} \frac{t^n}{n!} J_n(x) 
$$
exists for an open interval containing $t=0$, then $f(x,v)$ can be
represented into the bilateral Laplace transform of $M_{v}(f)$, i.e.,
\begin{equation}
f(x,v) = \int_{-\infty}^{\infty}  e^{-vt} M_{v}[f](x,t) \dd t,
\end{equation}
which implies $f(x,v)$ is completely determined by all its
moments. 

Recalling that $f(x,v)$ is an odd function of $v$ according to 
Lemma \ref{MustOdd}, we have that 
\begin{equation} \label{JnODE}
  J_n(x) = 0, \quad n = 0,2,4,\cdots. 
\end{equation}
Noticing that $\Vw(x,v)$ is an odd function of $v$, we integrate 
\eqref{wfxv} with respect to $v$ and obtain 
\[
  \opd{ J_1(x)}{x} = 0.
\]
Multiplying $v^2$ on both sides of \eqref{wfxv}, a simple calculation yields 
\begin{equation}
  \opd{J_3(x)}{x} - 2 J_1(x) \Vw_1(x) = 0.  
\end{equation}
Similarly we can obtain the differential equations for $n=5,7,\cdots$. 
Generally, we can write out the differential equations for $J_n(x)$,  
\begin{equation} \label{JnODE1}
  \opd{J_n}{x} - \sum_{k=1,3,\cdots,n-2}\binom{n-1}{k} 
  \Vw_k(x) J_{n-1-k}(x)   = 0, \quad  n = 1,3,5,\cdots,
\end{equation} 
where for $m\in \bbN^+, n \in \bbN^+$,   
\[
  \binom{m}{n} = \begin{cases}
	\frac{m!}{n!(m-n)!}, & m \geq n, \\
					 0 , & m < n .  
  \end{cases}
\]

Using \eqref{JnODE1} and \eqref{JnODE}, we obtain that  if the 
initial value $f_b = 0$, then 
\begin{equation} \label{Jn0}
  J_n(x) = 0 , \quad \forall x \in \bbR, \quad \forall n \in \bbN^+. 
\end{equation}
If $f_b=0$ and $f(x,\cdot) \in L^2(\Rv)$, then $f(x,v) = 0
$. Otherwise, there exists an $x\in(-l/2,l/2)$ such that $f(x,v) \neq
0$, which implies
\begin{equation} \label{contradict}
  \int_{\Rv} |f(x,v) |^2 \dd v \neq 0.  
\end{equation}
From \eqref{Jn0}, we have
$J_n(x)=0$ for all $n\in \bbN^+$.  Since $f(x, \cdot)\in L^2(\Rv)$,
$f(x, v)$ can be approximated by polynomial sequence $P_n(v)$ that $
\| f(x, \cdot) - P_n(\cdot) \|_{L^2(\Rv)} \rightarrow 0$ as
$n\rightarrow \infty$, and
\begin{equation}
  \int P_n(v) f(x,v) \dd v = 0. 
\end{equation}
Taking the limit as $n\rightarrow \infty$, we have 
\begin{equation}
  \int | f(x,v) |^2 \dd v = 0, 
\end{equation}
which contradicts with \eqref{contradict}. This gives $f(x,v)=0$ for 
all $x\in(-l/2,l/2)$, which gives us the uniqueness that
\begin{theorem}
  If $f_b \in L^2_o(\Rv)$, $f_1(x)$ and $f_2(x) \in L_o^2(\Rv)$ are
  two solutions of the IVP \eqref{WVO1}-\eqref{WVOI1}, then
  $f_1(x,v)=f_2(x,v)$.
\end{theorem}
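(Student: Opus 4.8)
The plan is to reduce everything to the zero-initial-datum case by linearity and then to squeeze out the conclusion from the moment hierarchy already assembled above. Concretely, set $f(x) = f_1(x) - f_2(x)$. Since \eqref{WVO1} is linear and $B(x)$ is a linear operator, $f$ solves \eqref{WVO1} with the homogeneous initial condition $f(-l/2) = f_b - f_b = 0$, and $f(x)$ is again odd, being the difference of two elements of $L^2_o(\Rv)$ (alternatively one may invoke Lemma \ref{MustOdd}, though it is not strictly needed here). It therefore suffices to show that this $f$ vanishes identically, i.e.\ that $f(x,v) = 0$ for every $x \in [-l/2,l/2]$.

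For that I would run the moment computation set up above. Writing the equation in the kinetic form \eqref{wfxv} and introducing the moments $J_n(x)$ and $\Vw_n(x)$ as in \eqref{VwnDef}, the oddness of $f(x,\cdot)$ kills all even moments, $J_n \equiv 0$ for $n$ even, cf.\ \eqref{JnODE}; and multiplying \eqref{wfxv} by $v^{n-1}$, integrating in $v$, substituting $w = v - v'$, and using that $\Vw(x,\cdot)$ is odd yields the triangular system \eqref{JnODE1} for the odd moments. The crucial structural feature is that the right-hand side of the equation for $J_n$ involves only the strictly lower-order moments $J_{n-1-k}$ with $k \ge 1$. Hence one argues by induction over the odd index: for $n=1$ the sum is empty, so $J_1$ is constant and equals $\int_{\Rv} v f_b(v)\dd v = 0$; and if $J_m \equiv 0$ for every odd $m < n$, then $\opd{J_n}{x} = 0$ while $J_n(-l/2) = \int_{\Rv} v^n f_b(v)\dd v = 0$, so $J_n \equiv 0$. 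This is precisely \eqref{Jn0}: every moment of $f(x,\cdot)$ vanishes, for every $x$.

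To finish I would recover $f(x,\cdot) = 0$ in $L^2(\Rv)$ from the vanishing of all its moments: since $f(x,\cdot) \in L^2(\Rv)$ it is approximated in $L^2$-norm by a sequence of polynomials $P_n$ (or, to stay safely within $L^2$, by the partial sums of its Hermite expansion, whose coefficients are finite linear combinations of the $J_n(x)$), and $\int_{\Rv} P_n(v) f(x,v)\dd v = 0$ for each $n$ by the previous paragraph; letting $n \to \infty$ forces $\int_{\Rv} |f(x,v)|^2\dd v = 0$. Thus $f(x,v) = 0$ a.e.\ for every $x$, i.e.\ $f_1 \equiv f_2$.

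The step I expect to be the real obstacle is this last one, the passage from ``all moments vanish'' to ``$f(x,\cdot) \equiv 0$''. A generic element of $L^2(\Rv)$ need not have any finite moments, and even when all moments exist they may fail to determine the function (an indeterminate Hamburger moment problem); moreover polynomials are not themselves in $L^2(\Rv)$, so the approximation argument must be handled with care. What actually makes it work is an extra decay assumption on $f(x,\cdot)$ — for instance $\de^{\delta |v|} f(x,v) \in L^1(\Rv)$ for some $\delta > 0$, equivalently convergence of the moment generating function $M_v[f](x,t)$ on a neighbourhood of $t = 0$. That same hypothesis is what legitimizes the $v$-integrations and the Fubini interchange used to derive \eqref{JnODE1}, and it makes $[\widehat{f(x)}]$ the restriction of a function analytic on a strip whose Taylor coefficients at the origin are proportional to the $J_n(x)$; then $J_n(x) = 0$ for all $n$ gives $[\widehat{f(x)}] \equiv 0$ on that strip, hence $f(x) = 0$ by Fourier inversion. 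So the clean statement is uniqueness within the class of odd $L^2$ solutions with sufficient velocity decay, and the remaining technical work is to fix that class and verify the moment identities inside it.
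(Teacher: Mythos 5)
Your proposal follows essentially the same route as the paper: form the difference of the two solutions, note it is odd with zero initial datum, derive the triangular moment system \eqref{JnODE1} so that all $J_n(x)$ vanish, and then conclude $f(x,\cdot)=0$ in $L^2(\Rv)$ from the vanishing of all moments. Your caveat about the final step is well taken and is exactly where the paper is loosest: its polynomial-approximation argument is stated without justification (polynomials are neither elements of nor dense in $L^2(\Rv)$), and the determinacy really rests on the moment-generating-function/decay hypothesis that the paper introduces only informally before the theorem -- precisely the extra assumption you propose to make explicit.
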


%


\section{Discussion on stationary Inflow Boundary Value Problem} 
With the results on the initial value problems, we are ready to study
the stationary Wigner equation with inflow boundary conditions, i.e.,
\begin{equation} \label{WignerBVP}
  \opd{}{x}f(x) + B(x) f(x) = 0, \quad x\in(-l/2,l/2),
\end{equation}
and
\begin{equation} \label{WignerBVPBC}
  P^+ f(-l/2) = f_L, \quad P^-f(l/2) = f_R,  
\end{equation}
where $f_L \in L^2(\Rv^+)$ and $f_R \in L^2(\Rv^-)$.  Here $P^{\pm}:
L^2(\Rv)\rightarrow L^2(\Rv^{\pm})$ are defined by
\[
  [P^+ u ] (v) = u(v) ,  \text{ if } v \in \Rv^+, 
  \quad 
  [P^- u ] (v) = u(v) ,   \text{ if } v \in \Rv^-,  
\]
where $\Rv^+ = \{  v>0\}$ and $\Rv^- = \{v<0\}$. 

Let us assume that there is a solution in $L^2(\Rv)$ for the
BVP
\eqref{WignerBVP}-\eqref{WignerBVPBC}. Due to the parity decomposition
of the solution, the odd part of the solution has to satisfy the
equations \eqref{JnODE1}. The equations \eqref{JnODE1} actually
give a one-to-one linear mapping between the odd part of the solution
at the boundarys, which is denoted as
\[
  [P_o f(l/2)](v) = Q_{l\rightarrow r} [P_o f(-l/2)](v), \quad
  [P_o f(-l/2)](v) = Q_{r\rightarrow l} [P_o f(l/2)](v).
\]
Here $Q_{l\rightarrow r}$ is the map of the odd part of the solution
from the left end to the right end, and $Q_{r\rightarrow l} =
Q_{l\rightarrow r}^{-1}$ is the inverse mapping of $Q_{l\rightarrow
  r}$. Here we point out that actually $Q_{r\rightarrow l}$ is given
by the solution of system
\[
  \opd{J_n}{x} + \sum_{k=1,3,\cdots,n-2}\binom{n-1}{k} 
  \Vw_k(x) J_{n-1-k}(x)   = 0, \quad  n = 1,3,5,\cdots
\]

Meanwhile, by theorem \ref{EvenWell}, there is a one-to-one linear
mapping between the even part of the solution at the boundaries,
too. We denote this map as
\[
  [P_e f(l/2)](v) = R_{l\rightarrow r} [P_e f(-l/2)](v), \quad
  [P_e f(-l/2)](v) = R_{r\rightarrow l} [P_e f(l/2)](v).
\]
Here $R_{l\rightarrow r}$ is the map of the even part of the solution
from the left end to the right end, and $R_{r\rightarrow l} =
R_{l\rightarrow r}^{-1}$ is the inverse mapping of $R_{l\rightarrow
  r}$.

Then we have the relations that for $v < 0$,
\[
\begin{array}{rcl}
f_R(v) &=& [P_o f(l/2)](v) + [P_e f(l/2)](v) \\ [2mm]
       &=& Q_{l\rightarrow r} [P_o f(-l/2)](v) + R_{l\rightarrow r} [P_e f(-l/2)](v) \\ [2mm]
       &=& Q_{l\rightarrow r} [-P_o f(-l/2)](-v) + R_{l\rightarrow r} [P_e f(-l/2)](-v) \\ [2mm]
       &=& -Q_{l\rightarrow r} [P_o f(-l/2)](-v) + R_{l\rightarrow r} [P_e f(-l/2)](-v), \\ [2mm]
f_L(-v) &=& [P_o f(-l/2)](-v) + [P_e f(-l/2)](-v).
\end{array}
\]
We can solve $[P_o f(-l/2)](-v) $,$ [P_e f(-l/2)](-v)$ from the equation:
\begin{equation}\label{eq:P_e f_L}
 [P_e f(-l/2)](v) =
 \left\{
 \begin{array}{ll}
 (Q_{r\rightarrow l} R_{l\rightarrow r} + I)^{-1}(Q_{r\rightarrow
l}f_R(-v)+f_L(v)), & v>0 \\ [2mm]
 [ P_e f(-l/2) ](-v), & v<0 
 \end{array}
 \right.
\end{equation}
and 
\begin{equation}\label{eq:P_o f_L}
 [P_o f(-l/2)](v) =
 \left\{
 \begin{array}{ll}
 f_L(v)-[P_e f(-l/2)](v), & v>0 \\ [2mm]
 -[ P_o f(-l/2) ](-v), & v<0 
 \end{array}
 \right.
\end{equation}

Thus the solution $[f(x)](v)$ of the BVP
\eqref{WignerBVP}-\eqref{WignerBVPBC} 
can be solved, and it can be decomposed into
the sum of $f_o(x) \in L^2_o(\Rv)$ and $f_e(x) \in L_e^2(\Rv)$, 
\begin{equation}
  f(x)=f_o(x)+f_e(x) 
\end{equation}
where $f_o(x)$ is obtained in the sense that all its moments can be 
obtained by solving the ODEs \eqref{JnODE1} with the initial value 
obtained through \eqref{eq:P_o f_L} , and $f_e(x)$ can be obtained by solving 
\eqref{WVE1} with the initial value given by \eqref{eq:P_e f_L}.

Particularly, for a simple case that $V(x)$ is an even
function, i.e, $V(-x)=V(x)$ (for example, $V(x) = \exp(-x^2/a)$ where
$a>0$ is a constant). In this case, we have that $\Vw_n(x)$ is odd,
\begin{equation} \label{EvenVw}
\Vw_n(-x) = -\Vw_n(x) 
\end{equation}
which can be verified by using \eqref{VwnDef} and \eqref{VwDef}.    

Observing \eqref{JnODE1} and using \eqref{EvenVw}, we can derive that 
$J_n(x)$ is even, i.e., 
\begin{equation}
J_n(-x) = J(x). 
\end{equation}
Especially,  
\begin{equation} \label{JnBothSides}
J_n(-l/2) = J_n(l/2) 
\end{equation}
which means 
\begin{equation}
 f_o(-l/2)=f_o(l/2).
\end{equation}
That is to say $ Q_{l\rightarrow r}=Q_{r\rightarrow l}=I$. Using
the symmetry analysis in \cite{LiLuSun2014}, we can show that
$R_{l\rightarrow r}=R_{r\rightarrow l} = I$. Then using \eqref{eq:P_e
f_L} and \eqref{eq:P_o f_L}, we obtain the initial values for the even
part and the odd part of the Wigner equation. Finally, the solution
of BVP \eqref{WignerBVP}-\eqref{WignerBVPBC} is constructed by the
solutions of the two IVPs.


\section{Conclusion}
We studied the Wigner equation with inflow boundary conditions
by parity decomposition. The pseudo-operation $\Theta[V]$ is proved
to be bounded for the even $L^2$-space, so the propagator
for the even Wigner IVP is invertible. For the odd part of the Wigner
function whose moment generating function exists, we can calculate
the Wigner function through calculating its moments. With the help of 
analysis in parity decomposition, we plan to design an implementable
moment method for the Wigner equation.  

\section*{Acknowledgements}
This research was supported in part by NSFC (91230107, 11325102,
91434201). 


\end{document}